\newtheorem{theorem}{Theorem}[section]
\newtheorem{lemma}[theorem]{Lemma}
\newtheorem{proposition}[theorem]{Proposition}
\theoremstyle{remark}
\def\QSet{\mbox{\rm\kern.24em
\vrule width.03em height1.48ex depth-.051ex \kern-.26em Q}}
\def\R{{\mathbb R}}
\def\A{{\mathbb A}}
\def\E{{\mathbb E}}
\def\Z{{\mathbb Z}}
\def\v{{\bf v}}\def\x{{\bf x}}\def\f{{\bf f}}\def\y{{\bf y}}
\def\be#1{\begin{equation}\label{#1}}
\def\bas{\begin{align*}}
\def\eas{\end{align*}}
\def\bi{\begin{itemize}}
\def\ei{\end{itemize}}
\newenvironment{proof}{\noindent {\bf Proof} }{\endprf\par}
\def \endprf{\hfill  {\vrule height6pt width6pt depth0pt}\medskip}
\def\emph#1{{\it #1}}
\begin{document}

\title[$L^2$ bounds for a Kakeya type maximal operator in $\R^3$]{$L^2$ bounds for a Kakeya type maximal operator in $\R^3$}

\author{Ciprian Demeter}
\address{Department of Mathematics, Indiana University, 831 East 3rd St., Bloomington IN 47405}
\email{demeterc@@indiana.edu}

\keywords{}
\thanks{The  author is supported by a Sloan Research Fellowship and by NSF Grant DMS-0901208}
\thanks{ AMS subject classification: Primary 42B20; Secondary 42B25}
\begin{abstract}

We prove that the maximal operator obtained by taking averages at scale 1 along $N$ arbitrary directions on the sphere, is bounded in $L^2(\R^3)$ by $N^{1/4}{\log N}$. When the directions are  $N^{-1/2}$ separated, we improve the bound to  $N^{1/4}\sqrt{\log N}$. Apart from the logarithmic terms these bounds are optimal.
\end{abstract}
\maketitle

\section{Introduction}
Let $F:\R^d\to\R$ for some $d\ge 2$, and let $\Sigma\subset S^{d-1}$ be a collection of $N$ unit vectors. We will use the notation
$$\x=(x_1,x_2,\ldots,x_d),$$
$$\v=(v_1,v_2,\ldots,v_d).$$
 We will be concerned with the following operator
$$M_{0}^{\Sigma}F(\x)=\max_{\v\in\Sigma}|\int_{-1/2}^{1/2} F(\x+t\v)dt|.$$
Note that for each $\v$, Fubini's theorem implies that
$$M_{\v}F(\x)=\int_{-1/2}^{1/2} F(\x+t\v)dt$$
satisfies
$$\|M_{\v}F\|_1\lesssim\|F\|_1.$$ Thus, the triangle inequality and interpolation with the trivial $L^\infty$ bound proves that
$$\|M_0^\Sigma F\|_p\lesssim N^{1/p}\|F\|_p,$$
for each $1\le p<\infty$.
This estimate is  not optimal for $p>1$. The critical exponent is always $p=d$, and one expects an $O(N^{\epsilon})$ bound (or perhaps even a logarithmic bound) for $p\ge d$.

When $d=2$ this was confirmed in \cite{Wa}, where an $L^2$ bound of $\log N$ was proved. The optimal $L^2$ bound was shown in \cite{Ka} to be $\sqrt{\log N}$. Interpolation with  $L^\infty$ produces the $L^p$ bound $(\log N)^{1/p}$ for $2\le p<\infty$, which is known to be optimal (see \cite{Kai}).

In three dimensions, no nontrivial estimates seem to appear in the literature. The critical exponent is $p=3$ and it is very hard to deal with it directly.
Our theorem is concerned with $p=2$, where orthogonality methods are available.
\begin{theorem}
\label{main1}
Let $\Sigma\subset S^2$ be any collection of $N$ unit vectors.
For each $F\in L^2(\R^3)$ we have
$$\|M_0^\Sigma F\|_2\lesssim N^{1/4}\log N\|F\|_2.$$
\end{theorem}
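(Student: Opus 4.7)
The plan is to prove the estimate by a Littlewood--Paley decomposition in frequency combined with a Kakeya-type $L^2$ bound at each dyadic scale. First, I would linearize $M_0^\Sigma$ by choosing a measurable $\v:\R^3\to\Sigma$ so that $M_0^\Sigma F(\x)=|F*\sigma_{\v(\x)}(\x)|$, where $\sigma_\v$ is arclength measure on the segment $\{t\v:t\in[-1/2,1/2]\}$, with $\widehat{\sigma_\v}(\xi)=\mathrm{sinc}(\pi\xi\cdot\v)$. Next, decompose $F=\sum_{k\ge 0}F_k$ with $\widehat{F_k}$ supported in $|\xi|\sim 2^k$ and apply the triangle inequality. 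The low-frequency piece $F_0$ is pointwise dominated by the Hardy--Littlewood maximal function and is harmless on $L^2$.

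At each scale $k\ge 1$, the multiplier $\mathrm{sinc}(\pi\xi\cdot\v)$ essentially projects onto the slab $|\xi\cdot\v|\lesssim 1$; intersected with the annulus $|\xi|\sim 2^k$ this is a plate of dimensions $1\times 2^k\times 2^k$ in frequency, dual in physical space to a tube of dimensions $1\times 2^{-k}\times 2^{-k}$ pointing along $\v$. So, acting on $F_k$, the operator $M_0^\Sigma$ becomes (morally) a Kakeya-type maximal function at thickness $\delta_k=2^{-k}$ over $N$ tubes in the directions of $\Sigma$. The classical pairwise-overlap $L^2$ argument (a C\'ordoba-type computation in $\R^3$) gives the bound $\delta^{-1/2}$ for $\delta$-separated tubes; at the critical scale $\delta_*=N^{-1/2}$, where at most $N$ directions can be $\delta_*$-separated, this yields exactly $N^{1/4}$.

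The target per-scale estimate is $\|M_0^\Sigma F_k\|_2\lesssim N^{1/4}\sqrt{\log N}\,\|F_k\|_2$ across the $O(\log N)$ relevant scales, with the very high frequencies handled by exploiting $\mathrm{sinc}$ decay to make the tail summable. Combining with the triangle inequality and the Cauchy--Schwarz trick $\sum_k\|F_k\|_2\le\sqrt{\log N}\,\|F\|_2$ gives $\|M_0^\Sigma F\|_2\lesssim N^{1/4}\log N\,\|F\|_2$. For $2^k\le N^{1/2}$, where the directions in $\Sigma$ need not be $\delta_k$-separated, one must bundle the directions into $\delta_k$-caps; for $2^k>N^{1/2}$, the directions are automatically $\delta_k$-separated but the raw C\'ordoba bound $2^{k/2}$ exceeds $N^{1/4}$, so one must apply C\'ordoba at the coarser scale $\delta_*$ and control the discrepancy between the two scales.

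The hard part will be handling arbitrary, non-separated $\Sigma$ in the Kakeya reduction: a naive bundling of nearby directions into a common $\delta_k$-cap loses a factor equal to the maximum cap occupancy, which can be as large as $N$ and would destroy the desired bound. I expect the resolution to involve a further wave packet decomposition of $F_k$ at the finer angular scale $2^{-k/2}$, combined with almost-orthogonality across distinct wave packet caps and a C\'ordoba-style overlap count inside each cap. The additional $\sqrt{\log N}$ loss distinguishing the arbitrary from the separated case should come from a dyadic pigeonholing over the range of cap occupancies in $\Sigma$.
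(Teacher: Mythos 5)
Your general architecture (Littlewood--Paley decomposition, per-scale bound $N^{1/4}\sqrt{\log N}$, combine across scales) matches the paper's at the top level, but two of your load-bearing steps are wrong or missing. First, the claim that ``very high frequencies'' can be ``handled by exploiting $\mathrm{sinc}$ decay to make the tail summable'' is unworkable: for any $k$, the multiplier $\widehat{\psi}(\xi\cdot\v)$ restricted to the annulus $|\xi|\sim 2^k$ is of size $1$ on an entire slab of the annulus, so $\|T_\v(S_kF)\|_2 \sim \|S_kF\|_2$ uniformly in $k$; there is no decay and the sum over $k$ diverges under the triangle inequality. The paper resolves this with a genuine tool, the Chang--Wilson--Wolff good-lambda inequality, which decouples the annuli at the cost of only $\sqrt{\log N}$, and this decoupling is the reason the final estimate has $N^{1/4}\log N$ rather than an uncontrolled high-frequency contribution. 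Restricting to the $\sim\log N$ dyadic scales up to $N^{1/2}$ and applying Cauchy--Schwarz, as you propose, handles the intermediate regime only; the regime $2^k>N^{1/2}$ is not accessible by your method.

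Second, and more fundamentally, your proposed resolution of the crowded-cap problem for arbitrary $\Sigma$ --- a $2^{-k/2}$ angular wave packet decomposition plus dyadic pigeonholing over cap occupancies --- is not the idea that makes the argument close, and it is not clear it could. The paper's key observation is geometric: for a fixed frequency cap $\omega$ at scale $k$, the directions $\v\in\Sigma$ with $T_\v(F_\omega)\neq 0$ live in a strip $A(\omega)$ of width $\sim 2^{-k}$ around a great circle on $S^2$, i.e. they are nearly coplanar. When such a strip contains $>\sqrt{N}$ directions one cannot afford the naive orthogonality count, but one can instead invoke a two-dimensional maximal estimate of Katz type (Lemma \ref{lemma98554}, built on \eqref{cdhjfh8437589798755}) giving $\sqrt{\log N}$ for the maximal operator restricted to that strip. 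A greedy selection shows there are at most $\sqrt{N}$ such ``bad'' strips, and Cauchy--Schwarz over them gives $N^{1/4}\sqrt{\log N}$; the remaining ``good'' directions are handled by the orthogonality bound exactly as in the separated case. Your proposal neither identifies the near-coplanarity of directions in a strip nor the reduction to a two-dimensional Kakeya estimate, and dyadic pigeonholing over occupancies does not by itself tell you how to treat a strip containing many directions. This is the genuine new idea of the proof and it is missing from your plan.
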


In the separated case we have the following small improvement.

\begin{theorem}
\label{main}
Let $\Sigma\subset S^2$ be a collection of $N$ unit vectors such that
\begin{equation}
\label{equa35}
\|\v-\v'\|\gtrsim N^{-1/2}
\end{equation}
for each $\v\not=\v'\in\Sigma$.
Then for each $F\in L^2(\R^3)$ we have
$$\|M_0^\Sigma F\|_2\lesssim N^{1/4}\sqrt{\log N}\|F\|_2.$$
\end{theorem}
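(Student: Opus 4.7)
The plan is to combine a Littlewood--Paley decomposition in frequency with an $\ell^2$ orthogonality bound at each scale, and then sum over scales. First I would linearize $M_0^\Sigma F$ by selecting a measurable $\v(\cdot):\R^3\to\Sigma$ nearly attaining the supremum, and decompose $F=\sum_{k\ge 0}F_k$ so that $\widehat{F_k}$ is supported in $\{|\xi|\sim 2^k\}$ (with the $k=0$ piece absorbing $|\xi|\lesssim 1$). Sublinearity of the maximal operator and Cauchy--Schwarz then give
\begin{equation*}
\|M_0^\Sigma F\|_2\le\sum_k\|M_0^\Sigma F_k\|_2\le\sum_k A_k\|F_k\|_2\le\Big(\sum_k A_k^2\Big)^{1/2}\|F\|_2,
\end{equation*}
so it suffices to prove a single-scale bound $\|M_0^\Sigma F_k\|_2\le A_k\|F_k\|_2$ with $\sum_k A_k^2\lesssim N^{1/2}\log N$.

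At each scale $R:=2^k$, the basic tool is the square-function majorization $\sup_\v|\cdot|^2\le\sum_\v|\cdot|^2$ combined with Plancherel:
\begin{equation*}
\|M_0^\Sigma F_k\|_2^2\le\int|\widehat{F_k}(\xi)|^2\sum_{\v\in\Sigma}|\widehat{\sigma_\v}(\xi)|^2\,d\xi,
\end{equation*}
where $\widehat{\sigma_\v}(\xi)=\operatorname{sinc}(\v\cdot\xi)$ is the Fourier transform of the line-segment measure in direction $\v$. The $N^{-1/2}$-separation \eqref{equa35}, combined with area packing on $S^2$, yields $\sum_{\v\in\Sigma}|\widehat{\sigma_\v}(\xi)|^2\lesssim N/R$ for $|\xi|=R$: the dominant contribution comes from the equatorial band $\{\v:|\v\cdot\xi|\lesssim 1\}$, which has area $\sim 1/R$ on $S^2$ and so contains $\lesssim N/R$ separated directions. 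For $R\ge N^{1/2}$ this already delivers $A_k^2\lesssim N/R\le N^{1/2}$.

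For low scales $R<N^{1/2}$ the bare square-function bound $\sqrt{N/R}$ is lossy, and one must exploit smoothness. Since $F_k$ is essentially locally constant on spatial scales $1/R\gg N^{-1/2}$, line averages are (up to a rapidly decaying tail) comparable to averages over tubes of thickness $1/R$. This reduces $M_0^\Sigma F_k$ to a Kakeya-type maximal operator for $\lesssim R^2$ tubes of width $1/R$, whose directions form a $1/R$-net of $\Sigma$. A C\'ordoba-style $L^2$ argument for this three-dimensional Kakeya operator then gives $A_k^2\lesssim R\log R$. Combining both regimes, $A_k^2\lesssim\min(R\log R,\,N/R)$, and a direct geometric computation shows $\sum_k A_k^2\lesssim N^{1/2}\log N$, which yields the claimed bound.

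The main technical obstacle is the low-frequency reduction to a Kakeya maximal operator and the accompanying $L^2$ estimate in $\R^3$: one must carefully handle the boundary jumps of the line segment and the error of approximating $\Sigma$ by a $1/R$-net (likely through an auxiliary Hardy--Littlewood-type maximal bound). A secondary concern is convergence of the sum over $k$, since $A_k^2\sim N/R$ persists for all $R\ge N^{1/2}$; this is addressed by truncating $\widehat{F}$ at a polynomial-in-$N$ cutoff and handling the tail with the trivial $\sqrt{N}$ bound.
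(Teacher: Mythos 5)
Your single-scale bound $A_k^2\lesssim N/R$ for $R\ge N^{1/2}$ is where the argument breaks, and the break is fatal for the summation scheme you chose. The area-packing count in the equatorial band $\{\v:|\v\cdot\xi|\lesssim 1\}$ gives $\lesssim N/R$ directions only while the band width $1/R$ exceeds the separation $N^{-1/2}$. Once $R>N^{1/2}$, the band is thinner than the separation scale, and the packing becomes effectively one-dimensional along the great circle: the band still contains up to $\sim N^{1/2}$ separated directions no matter how large $R$ gets (this is precisely what the paper records in estimate \eqref{equa31}: $n(\omega)\lesssim\max\{N2^{-k},N^{1/2}\}$). Thus the correct single-scale constant is $A_k\sim N^{1/4}$ uniformly in $k\ge\log_2\sqrt N$, and $\sum_k A_k^2$ diverges. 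Cauchy--Schwarz across scales then gives nothing, and the ``secondary concern'' you flag is in fact the central one. The proposed fix of truncating at a polynomial-in-$N$ frequency and applying the trivial $N^{1/2}$ bound to the tail does not help either: $\|F^l\|_2$ is generically comparable to $\|F\|_2$, so you would only obtain $\|M_0^\Sigma F^l\|_2\lesssim N^{1/2}\|F\|_2$, far worse than the target $N^{1/4}\sqrt{\log N}$.

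This is exactly the obstruction the paper is built to overcome. It does use the same single-scale input (Proposition \ref{Prop1}: $\|M_0(S_kF)\|_2\lesssim N^{1/4}\|S_kF\|_2$ for every $k$, via the tube decomposition $S_kF=\sum_\omega F_\omega$, the separation count, and the bounded-overlap estimate \eqref{equa32}), and it does sum over the $O(\log N)$ intermediate scales by triangle inequality plus Cauchy--Schwarz exactly as you propose. But for the infinitely many high scales it replaces the lossy triangle inequality with the Chang--Wilson--Wolff good-$\lambda$ inequality (Lemma \ref{L5}), which compares $T_\v^lF$ to the martingale square function $\Delta(T_\v^lF)$; Lemma \ref{L1} then dominates that square function by $\bigl(\sum_k |O(T_\v(S_kF))|^2\bigr)^{1/2}$, whose $L^2$ norm is $\lesssim N^{1/4}\|F\|_2$ by the single-scale bound. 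The $\sqrt{\log N}$ loss arises from optimizing the $\epsilon_N$ parameter in the good-$\lambda$ argument. Without such a decoupling device, the naive sum over annuli cannot close. (A secondary, smaller issue: the low-frequency reduction you sketch, replacing line averages by $1/R$-tube averages and invoking a ``C\'ordoba-style 3D Kakeya $L^2$ bound'', is plausible and parallels estimate \eqref{equa32}, but it needs the error from discretizing $\Sigma$ to a $1/R$-net to be controlled; the paper avoids this by working directly with the tube decomposition of $S_kF$ and Lemma \ref{L543}.)
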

These results are sharp as far as the exponent of $N$ is concerned.  Indeed,  the function $F(\x)=\frac{1}{N^{1/2}\|\x\|^2}1_{N^{-1/2}<\|\x\|<2}$ has the $L^2$ norm  $\|F\|_2\sim N^{-1/4}$. On the other hand, since $$|F(\x)-F(\textbf{y})|\le \frac{1}{N\|\x\|^3}$$
for $\x,\textbf{y}$ in the support of $F$ which are separated by $O(N^{-1/2})$, if follows that $\|M_0^\Sigma F\|_2\sim 1$. It is not clear whether the logarithmic terms from the estimates in Theorems \ref{main1} and \ref{main} can be eliminated.

The  proofs  of Theorems \ref{main1} and \ref{main} rely on a wave packet decomposition similar to the one used in \cite{Dem}, where a different  proof is given to the two dimensional result in \cite{Ka}.
The annuli are first decoupled using the Chang-Wilson-Wolff inequality. As a result, $M_0^\Sigma$ is localized in frequency inside a fixed annulus. This is the source of a $\sqrt{\log N}$ loss in Theorems \ref{main1} and \ref{main}. The localized operator is then estimated using a few  analytic and combinatorial observations.

The advantage one has in  two dimensions, as explained in \cite{Dem}, is that the vector field $\v:\R^2\to\Sigma$ contributing to a fixed wave-packet is nicely localized inside a small arc on the circle. This allowed the splitting in \cite{Dem} of the wave packets into $\log N$ clusters, each having nice orthogonality properties. In $\R^3$, the vector field $\v$ is only loosely localized, inside a strip on the sphere. This creates difficulties in dealing with all annuli simultaneously, and motivates the initial decoupling.

A reproof of the two dimensional result along the same lines is sketched in Section \ref{two}.

\section{Relation with the Nikodym maximal function}
\label{Rel}
The result of  Theorem \ref{main} does not imply anything new about the Hausdorff dimension of the Kakeya sets in $\R^3$. In fact, it only implies (via standard considerations) that their dimension is at least 2. The best known lower bound is $\frac{5}{2}$ and it is due to Wolff \cite{Wol}. It is conjectured that the dimension should be 3.

While the fact that Kakeya sets in $\R^3$ have dimension at least 2 follows trivially from the fact that Kakeya sets in $\R^2$ have dimension  2, there does not seem to be a direct way of using the $L^2$ bound from \cite{Ka} for  $M_0^\Sigma$ in two dimensions,  to derive the three dimensional results in Theorems \ref{main1} and \ref{main}. However, as explained in Section \ref{general}, a variant of the two dimensional result will be used as part of the proof of Theorem \ref{main1}.

Define for each $0<\delta\ll 1$ and each integer $d\ge 2$
$$F_\delta^{**}:\R^d\to\R,\;\;\;\;\; F_\delta^{**}(\x)=\sup_{T}\frac1{|T|}\int_T|f|$$
where $T$ runs over all cylinders (tubes) in $\R^d$ containing $\x$, with length 1 and cross section radius $\delta$. This is sometimes referred to as the {\em Nikodym maximal function}.
When $d=2$, the optimal bound
\begin{equation}
\label{equa2}
\|F_\delta^{**}\|_{L^2(\R^2)}\lesssim \log (\frac1{\delta})^{1/2}\|F\|_{L^2(\R^2)}
\end{equation}
  was proved in \cite{Cor}\footnote{Actually, the bound in \cite{Cor} is for a larger operator, where averages are taken over tubes of eccentricity $\delta$, and arbitrary length}. The optimal bound
$\|F_\delta^{**}\|_{L^p(\R^2)}\lesssim \log (\frac1{\delta})^{1/p}\|F\|_{L^p(\R^2)}$
follows via interpolation with $L^\infty$. When $d=3$, the optimal (up to $\delta^{-\epsilon}$) operator norm  $\|F_\delta^{**}\|_{L^{5/2}(\R^3)\to L^{5/2}(\R^3)}$
was proved by Wolff \cite{Wol}.

There does not seem to exist a direct way of using \eqref{equa2}  to derive the  optimal bound from \cite{Ka}
$$\|M_0^\Sigma F\|_{L^2(\R^2)}\lesssim \log (\frac1{\delta})^{1/2}\|F\|_{L^2(\R^2)},$$
not even in the case when $\Sigma$ is a collection consisting of $\delta^{-1}$ unit vectors in $\R^2$, which are  $\delta$ separated. The same can be said when $d\ge 3$, too.
The averages on line segments appearing in the definition of $M_0^\Sigma$ are more "singular"; an additional smoothing effect appears when one averages over tubes. The contrast will be detailed in Section \ref{Que}.

On the other hand, any bound for $M_0^\Sigma$ in a given dimension $d\ge 2$ is easily seen to imply the same bound for $F_\delta^{**}$. We will briefly explain this below, in the case  when $d=3$ and $p=2$.
Let $\Sigma_\delta$ be a collection consisting of $\sim \delta^{-2}$ unit vectors in $\R^3$ such that
for each $\x\in \R^3\setminus \textbf{0}$ there exists $\v_\x\in \Sigma^{\delta}$ satisfying $\|\v_\x-\frac{\x}{\|\x\|}\|\le \delta$.
We show that Theorem \ref{main1} implies
$$\|F_\delta^{**}\|_{L^2(\R^3)}\lesssim \|M_0^{\Sigma_\delta}\|_{2\to 2}\|F\|_{L^2(\R^3)}.$$

Assume $F$ is positive.
Let $\x\mapsto T_\x$ be a measurable selection such that
\begin{enumerate}
\item $T_\x$ is a cylinder in $\R^3$ with length 1, cross section radius $10\delta$ and pointing in the direction $\v_\x$
\item the average of $F$ over $T_\x$ is greater than $\frac1{100}F_\delta^{**}(\x)$.
\end{enumerate}
Let $B_\x$ the ball of radius $10\delta$ centered at the same point as $T_\x$.
It will suffice to prove
$$\int_{\R^3}\left(\int_{T_\x}F\right )^2d\x\lesssim \delta^4\|M_0^{\Sigma_\delta}\|_{2\to 2}^2\int F^2$$
A simple geometric observation shows that
$$\int_{T_\x}F\lesssim \delta^{-1}\int_{B_\x}M_{\v_\x}F(\y)d\y\le \delta^{-1}\int_{B_\x}M_{0}^{\Sigma_\delta}F(\y)d\y.$$
Thus, via H\"older,
$$\int_{\R^3}\left(\int_{T_\x}F\right )^2d\x\lesssim \delta\int_{\R^3}\int_{B_\x}(M_{0}^{\Sigma_\delta}F(\textbf{y}))^2d\textbf{y}d\x$$
$$= \delta\int_{\R^3}(M_{0}^{\Sigma_\delta}F(\textbf{y}))^2[\int_{\R^3}1_{B_\x}(\textbf{y})d\x]d\textbf{y}\lesssim \delta^4\int_{\R^3}(M_{0}^{\Sigma_\delta}F(\textbf{y}))^2d\textbf{y}\lesssim \delta^4\|M_0^{\Sigma_\delta}\|_{2\to 2}\int_{\R^3} F^2$$

\section{$M_0$ restricted to an annulus: the separated case}
Fix $\Sigma$ as in Theorem \ref{main}. We will denote by $B$ the unit ball in $\R^3$. Let $\psi:\R\to\R$ be a positive Schwartz function such that $\hat{\psi}$ is supported in $[-1,1]$.

To prove Theorems \ref{main1} and \ref{main}, it suffices to prove the same bounds for
$$M_{0}F(x,y,z)=\max_{\v\in\Sigma}|T_{\v}(F)(x,y,z)|,$$
where
$$T_{\v}(F)(x,y,z)=\int_{\R} F(\x+t\v)\psi(t)dt.$$
For each $k$, we denote by $\A_k$ the annulus
$$\A_k:=\{2^{k-1}\le \|(\xi,\eta,\theta)\|\le 2^{k+1}\}.$$
Let $\mu\in C^{\infty}(\R^3)$ be supported in $\A_0$ so that
$$\sum_{k\in\Z}\mu_k(\xi,\eta,\theta):=\mu(2^{-k}\xi,2^{-k}\eta,2^{-k}\theta)=1,\;\;(\xi,\eta,\theta)\not=\textbf{0}.$$
Define
$$\widehat{S_kF}({\xi}, \eta, \theta)=\hat{F}({\xi}, \eta, \theta)\mu_k({\xi},\eta,\theta),$$
and note that 
$$F=\sum_{k}S_kF.$$

In this section we prove the main result leading to Theorem \ref{main}.

\begin{proposition}
\label{Prop1}
Let $\Sigma\subset S^2$ be a collection of $N$ vectors satisfying  \eqref{equa35}.
Then for each $k\ge 0$
$$\|M_0(S_kF)\|_2\lesssim N^{1/4}\|S_kF\|_2$$
\end{proposition}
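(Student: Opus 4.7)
The plan is to combine an $L^2$ square function estimate with a geometric counting argument based on the separation of $\Sigma$ and the Fourier localization of $T_\v$. The key pointwise observation is that for each frequency $\xi\in\A_k$, the symbol $\hat\psi(\xi\cdot\v)$ is non-negligible only when $\v$ lies in an angular band of width $\sim 2^{-k}$ around the great circle $\xi^\perp\cap S^2$. By \eqref{equa35} one can count directly on the sphere:
\bas
\#\{\v\in\Sigma:|\hat\psi(\xi\cdot\v)|\gtrsim 1\}\lesssim\max(N^{1/2},\,2^{-k}N).
\eas
The right hand side equals $N^{1/2}$ exactly in the range $2^k\ge N^{1/2}$, and this threshold drives the two cases of the proof.

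In the high-frequency regime $2^k\ge N^{1/2}$, the bound follows immediately from the plain square function
\bas
\|M_0(S_kF)\|_2^2\le\sum_{\v\in\Sigma}\|T_\v S_kF\|_2^2=\int|\widehat{S_kF}(\xi)|^2\sum_{\v\in\Sigma}|\hat\psi(\xi\cdot\v)|^2\,d\xi\lesssim N^{1/2}\|S_kF\|_2^2,
\eas
which gives $\|M_0(S_kF)\|_2\lesssim N^{1/4}\|S_kF\|_2$.

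In the low-frequency regime $2^k<N^{1/2}$, the count $2^{-k}N$ makes the raw square function too weak. Here I would exploit that $S_kF$ is smooth at spatial scale $2^{-k}\gg N^{-1/2}$ and reduce to a smaller, $2^{-k}$-separated set of representatives. Concretely, I would perform a wave packet decomposition at cap scale $N^{-1/2}$ on $S^2$---writing $S_kF=\sum_\alpha P_\alpha S_kF$, where $P_\alpha$ is the Fourier projection onto the sector $\omega_\alpha^*\subset\A_k$ over a spherical cap $\omega_\alpha$ of radius $N^{-1/2}$---then cluster $\Sigma$ into $\sim 2^{2k}$ groups of angular diameter $\lesssim 2^{-k}$, pick one representative per cluster to form $\tilde\Sigma$ with $|\tilde\Sigma|\sim 2^{2k}$, and show $\|M_0^\Sigma(S_kF)\|_2\lesssim\|M_0^{\tilde\Sigma}(S_kF)\|_2$. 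Since $\tilde\Sigma$ is $2^{-k}$-separated, the analogous pointwise count for $\tilde\Sigma$ returns only $\sim 2^k$ relevant directions at each $\xi$, and the square function now yields $\|M_0^{\tilde\Sigma}(S_kF)\|_2\lesssim 2^{k/2}\|S_kF\|_2\le N^{1/4}\|S_kF\|_2$.

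The main obstacle is precisely this low-frequency reduction from $\Sigma$ to $\tilde\Sigma$. A direct triangle inequality fails: on $\A_k$ the difference $\hat\psi(\xi\cdot\v)-\hat\psi(\xi\cdot\tilde\v)$ is of order one (not small) when $|\v-\tilde\v|\sim 2^{-k}$, because $|\xi\cdot(\v-\tilde\v)|\sim 1$ on the support of $\widehat{S_kF}$. The resolution uses the wave packets $P_\alpha S_kF$, whose physical-space localization on plates of dimensions $N^{1/2}2^{-k}\times N^{1/2}2^{-k}\times 2^{-k}$ perpendicular to $\omega_\alpha$ allows one to view the operators $T_\v$ for $\v$ near $\omega_\alpha^\perp$ as essentially a single operator on each plate, up to residual errors that can be tallied combinatorially via the overlap count above. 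This combinatorial accounting constitutes the ``few analytic and combinatorial observations'' alluded to in the introduction.
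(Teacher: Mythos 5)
Your high-frequency case ($2^k\ge N^{1/2}$) is correct and is essentially the paper's first estimate, just organized by frequency $\xi$ rather than by sector $\omega\in\Omega_k$; either bookkeeping gives the $N^{1/4}$ bound once the separation \eqref{equa35} caps the number of $\v\in\Sigma$ in a $2^{-k}$-band at $\max(N^{1/2},2^{-k}N)$.

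The low-frequency case $2^k<N^{1/2}$ has a genuine gap. The target bound $\|M_0(S_kF)\|_2\lesssim 2^{k/2}\|S_kF\|_2$ is right, but your route to it rests on the unproven reduction $\|M_0^{\Sigma}(S_kF)\|_2\lesssim\|M_0^{\tilde\Sigma}(S_kF)\|_2$, where $\tilde\Sigma$ is a set of cluster representatives at angular scale $2^{-k}$. You correctly diagnose why a direct comparison fails (on $\A_k$, the symbols $\hat\psi(\xi\cdot\v)$ and $\hat\psi(\xi\cdot\tilde\v)$ differ by $O(1)$ for $|\v-\tilde\v|\sim 2^{-k}$), but the proposed remedy --- wave packets at angular cap scale $N^{-1/2}$, viewing the $T_\v$ within a cluster as ``essentially one operator on each plate'' and tallying ``residual errors combinatorially'' --- is a sketch, not an argument, and it is not clear the reduction even holds: on a single wave packet, the outputs $T_\v$ for $\v$ ranging over a $2^{-k}$-cluster live on a family of tilted tubes with small mutual overlap, so a single representative $\tilde\v$ need not dominate the cluster maximum pointwise or in $L^2$.

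The paper sidesteps this entirely, and the cleaner route is worth internalizing. Decompose $S_kF=\sum_{\omega\in\Omega_k}F_\omega$ into sectors over caps of \emph{unit spatial diameter} (angular width $\sim 2^{-k}$), not width $N^{-1/2}$. Lemma~\ref{L543} shows $\|M_0F_\omega\|_p\lesssim\|F_\omega\|_p$ for a single sector, because all the kernels $K_\v$ with $\v\in A(\omega)$ are dominated by one fixed $L^1$-normalized kernel adapted to the dual $1\times 1\times 2^{-k}$ box. Then one observes that each $\v$ has $T_\v F_\omega\neq 0$ for only $O(2^k)$ sectors $\omega$, and Cauchy--Schwarz gives $\max_{\v}|T_\v(\sum_\omega F_\omega)|^2\le 2^k\sum_\omega\max_\v|T_\v F_\omega|^2$; integrating and summing the orthogonal pieces $F_\omega$ yields $2^{k/2}$ with \emph{no separation hypothesis on $\Sigma$ at all}. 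In particular, the $2^{k/2}$ bound is universal, so your clustering step is not only unproven but unnecessary: the paper's single-sector kernel estimate plus Cauchy--Schwarz is exactly the device that replaces the reduction you were trying to justify.
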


Fix $k\ge 0$. Note that the Fourier transform of $S_kF$ is supported in the annulus $\A_k$.
 Let $\Xi_k$ be a partition of the sphere of radius $2^{k-1}$ into $\sim 2^{2k}$ caps $C_\omega$ with area $\sim 1$. The caps do not need to have the same shape. We will only insist  that $C_\omega \subset D(c_\omega,1)$, for some $c_\omega\in C_\omega$, where $D(c,1)$ is the disk centered at $c$ with radius $1$ on the sphere of radius $2^{k-1}$. For each $C_\omega$, let $\omega$ be the part of the cone with vertex at the origin, generated by $C_\omega$, which lies in the annulus $\A_k$,
$$\omega:=\{(\xi,\eta,\theta)\in \A_k:2^{k-1}\frac{(\xi,\eta,\theta)}{\|(\xi,\eta,\theta)\|}\in C_\omega\}.$$
We denote by $\Omega_k$ the collection of these tubes $\omega$.

Decompose
$$S_kF=\sum_{\omega\in\Omega_k}F_\omega,$$
where $\widehat{F_\omega}=\widehat{F}1_{\omega}.$ We will rely on the fact that $F_\omega$ are pairwise orthogonal.
For each $\omega\in\Omega_k$, let
$$A_\omega:=\{\textbf{w}\in S^2:|(\xi,\eta,\theta)\cdot \textbf{w}|\le 1\text{ for some }(\xi,\eta,\theta)\in\omega \}.$$
Then $A_\omega$ is a strip on the unit sphere of width $\sim 2^{-k}$.
Note that
$$T_{\v}(F_\omega)(x,y,z)=\int\hat{F_\omega}(\xi,\eta,\theta)\hat{\psi}(v_1\xi+v_2\eta+v_3\theta)e^{i(x\xi+y\eta+z\theta)}d\xi d\eta d\theta$$
is nonzero only if $\v\in A(\omega)$.

We need the following lemma (with $p=2$)
\begin{lemma}
\label{L543}If $\omega\in\Omega_k$ then
$$\|M_0F_\omega\|_p\lesssim \|F_\omega\|_p,$$
for each $1\le p\le \infty$.
\end{lemma}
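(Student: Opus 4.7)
The plan is to exploit the Fourier support restriction on $F_\omega$ to replace the singular line-integral operator $T_\v$ with a genuine convolution whose kernel is, uniformly in $\v\in A_\omega$, pointwise dominated by a fixed $L^1$ bump; Young's inequality will then finish the job for every $1\le p\le\infty$ simultaneously. Concretely, fix a smooth cutoff $\chi_\omega\in C_c^\infty(\R^3)$ with $\chi_\omega\equiv 1$ on $\omega$ and supported in a mild enlargement. Since $\widehat{F_\omega}=\widehat{F_\omega}\,\chi_\omega$, one has
$$T_\v F_\omega = F_\omega * K_{\omega,\v},\qquad \widehat{K_{\omega,\v}}(\xi)=\chi_\omega(\xi)\,\hat\psi(\xi\cdot\v).$$
For $\v\notin A_\omega$ the factor $\hat\psi(\xi\cdot\v)$ already vanishes on $\omega$ (since $\hat\psi$ is supported in $[-1,1]$), so $T_\v F_\omega\equiv 0$ and only $\v\in A_\omega$ contributes to $M_0 F_\omega$.

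The heart of the argument is the following uniform wave-packet estimate: there is a single Schwartz bump $K\in L^1(\R^3)$, depending only on $\omega$ and with $\|K\|_1\lesssim 1$, such that $|K_{\omega,\v}(\x)|\le K(\x)$ for every $\v\in A_\omega$. To see it, work in an orthonormal frame $(e_1,e_2,e_3)$ with $e_1$ along the axis of the cone generating $\omega$. In these coordinates $\supp\chi_\omega$ is essentially a box of size $(2^k,1,1)$, and the geometric hypothesis $\v\in A_\omega$ forces $v_1=O(2^{-k})$, while $v_2,v_3=O(1)$. Hence on $\supp\chi_\omega$ one has $\xi\cdot\v=O(1)$, and the partial derivatives $\partial_{\xi_j}(\xi\cdot\v)=v_j$ have just the right sizes so that $\chi_\omega(\xi)\hat\psi(\xi\cdot\v)$ is a smooth symbol at scale $(2^k,1,1)$, with $C^N$ bounds uniform in $\v\in A_\omega$. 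Repeated integration by parts in $\xi$ then yields
$$|K_{\omega,\v}(\x)|\lesssim 2^k\,\langle 2^k x_1\rangle^{-N}\langle x_2\rangle^{-N}\langle x_3\rangle^{-N}$$
for every $N$; the right-hand side may be taken as $K(\x)$, and a direct computation gives $\|K\|_1\lesssim 1$.

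Finally, $M_0 F_\omega(\x)\le (|F_\omega|*K)(\x)$ pointwise, so Young's inequality delivers $\|M_0 F_\omega\|_p\le \|K\|_1\|F_\omega\|_p\lesssim \|F_\omega\|_p$ for all $1\le p\le\infty$. The main obstacle I anticipate is the uniform-in-$\v$ symbol estimate: one has to check that the derivative bounds on $\chi_\omega(\xi)\hat\psi(\xi\cdot\v)$ do not degrade as $\v$ sweeps the full strip $A_\omega$, rather than just near a single central direction. This, however, is precisely what the constraint $v_1=O(2^{-k})$, read off from the definition of $A_\omega$, guarantees.
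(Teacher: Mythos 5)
Your argument is correct and is essentially the paper's own proof: rotate so the axis of $\omega$ is a coordinate axis, observe that $\v\in A_\omega$ forces the component of $\v$ along that axis to be $O(2^{-k})$, use this to obtain symbol-type derivative bounds on $\chi_\omega(\xi)\hat\psi(\xi\cdot\v)$ at scale $(2^k,1,1)$, deduce the uniform kernel majorant $K(\x)\sim 2^k\langle 2^k x_1\rangle^{-N}\langle x_2\rangle^{-N}\langle x_3\rangle^{-N}$, and finish with Young's inequality. The only cosmetic difference is the labeling of which coordinate carries the long axis of $\omega$.
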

\begin{proof}
Without loss of generality, assume $\omega$ points in the $z$ direction, in other words,  the $z$ axis intersects $C_\omega$. Then for each $\v\in A(\omega)$,
$$T_\v(F_\omega)(x,y,z)=\int \widehat{F_\omega}(\xi,\eta,\theta)m_\v(\xi,\eta,\theta)e^{i[{\xi}{x}+\eta y+\theta z]}d\xi d\eta d\theta,$$
where
$$m_\v(\xi,\eta,\theta)=\phi(\xi,\eta,\theta/2^k)\hat{\psi}({\xi}v_1+\eta v_2+\theta v_3),$$
and $\phi$ is an appropriate smooth bump function adapted to and supported on the ball of radius 10.
Note that $|v_3|\lesssim 2^{-k}$. This implies that for each $l_i\ge 0$,
$$\|\partial^{l_1}_\xi\partial^{l_2}_\eta\partial^{l_3}_\theta m_\v\|_{\infty}\lesssim 2^{-l_3k}$$
Thus the inverse Fourier transform $K_\v:=(m_\v)\check {\;}$ satisfies
$$K_\v(x,y,z)\lesssim K(x,y,z):=\frac{1}{(1+|x|)^{10}}\frac1{(1+|y|)^{10}}\frac{2^k}{(1+|z2^k|)^{10}},$$
uniformly over $\v\in A(\omega)$. Hence
$$\|\max_{\v\in \Sigma}|T_\v(F_\omega)|\|_p\le \|F_\omega*K\|_p\lesssim \|F_\omega\|_p\|K\|_1\lesssim \|F_\omega\|_p.$$
\end{proof}

\begin{proof}[of Proposition \ref{Prop1}]

We will rely on two estimates.
The first one is
$$\|M_{0}(\sum_{\omega\in\Omega_k}F_\omega)\|_2^2=\|\max_{\v\in\Sigma}|T_{\v}(\sum_{\omega\in\Omega_{\v,k}}F_\omega)|\|_2^2\le\sum_{\v\in\Sigma}\|T_{\v}(\sum_{\omega\in\Omega_{\v,k}}F_\omega)\|_2^2\lesssim $$$$ \sum_\v\|\sum_{\omega\in\Omega_{\v,k}}F_\omega\|_2^2=\sum_\v\sum_{\omega\in\Omega_{\v,k}}\|F_\omega\|_2^2=\sum_{\omega\in\Omega_k}\|F_\omega\|_2^2n(\omega),$$
where $\Omega_{\v,k}$ are those $\omega\in\Omega_k$ such that $\v\in A(\omega),$
and $n(\omega)$ is the cardinality of $\Sigma\cap A(\omega)$. The key is that, due to the relative uniform distribution of the vectors in $\Sigma$ on the unit sphere \eqref{equa35}, $A(\omega)$ contains at most $N2^{-k}$ vectors from $\Sigma$, if $2^k\lesssim {N^{1/2}}$, and at most $N^{1/2}$ such vectors if $2^k\ge N^{1/2}$.
Thus
\begin{equation}
\label{equa31}
\|M_0(S_kF)\|_2\lesssim \sqrt{\max\{N2^{-k},N^{1/2}\}}\|S_kF\|_2
\end{equation}
The second estimate is independent of the nature of $\Sigma$, and it relies on Lemma \ref{L543}, the Cauchy-Schwartz inequality, and the fact that for each  $\v\in\Sigma$, $T_\v(F_\omega)$ is nonzero only for $O(2^k)$ tubes $\omega$
$$
\int\max_{\v\in\Sigma}|T_\v(\sum_\omega F_\omega)|^2\le \int \max_{\v\in\Sigma}2^k\sum_\omega|T_\v( F_\omega)|^2\le$$$$  2^k\sum_\omega\int\max_{\v\in\Sigma}|T_\v( F_\omega)|^2\lesssim 2^{k}\int |S_kF|^2$$
Thus
\begin{equation}
\label{equa32}
\|M_0(S_kF)\|_2\lesssim 2^{k/2}\|S_kF\|_2.
\end{equation}
The proposition now follows from \eqref{equa31} and \eqref{equa32}.
\end{proof}

\section{$M_0$ restricted to an annulus: the general case}
\label{general}

In this section we will not impose any restrictions on $\Sigma$. We prove the main result leading to Theorem \ref{main1}.
\begin{proposition}
\label{Prop1dfjdfuio-}
For each $k\ge 0$,
$$\|M_0(S_kF)\|_2\lesssim N^{1/4}\sqrt{\log N}\|S_kF\|_2$$
\end{proposition}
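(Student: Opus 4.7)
My plan is to follow the template of the proof of Proposition \ref{Prop1}, replacing the use of the separation hypothesis in estimate \eqref{equa31} with a slicewise application of Katz's sharp two-dimensional Kakeya estimate. As in that proof, two estimates are combined.

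The first estimate is \eqref{equa32}, which holds for arbitrary $\Sigma$ and yields
$$\|M_0(S_kF)\|_2 \lesssim 2^{k/2}\|S_kF\|_2.$$
This already matches the target bound $N^{1/4}\sqrt{\log N}\|S_kF\|_2$ in the range $2^k \lesssim N^{1/2}\log N$.

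For the complementary range $2^k \gtrsim N^{1/2}$, I would produce a substitute for \eqref{equa31} via the following geometric observation. For each $\omega \in \Omega_k$, the set $\Sigma\cap A(\omega)$ lies within a $2^{-k}$-neighborhood of the great circle perpendicular to $c_\omega$, so it is effectively planar. Choosing coordinates adapted to $c_\omega$, applying Fubini in the $c_\omega$-direction, and invoking Katz's $\sqrt{\log n}$ bound for an arbitrary collection of $n$ planar directions on each slice, one would obtain
$$\Big\|\sup_{v \in \Sigma\cap A(\omega)}|T_v F_\omega|\Big\|_2 \lesssim \sqrt{\log n(\omega)}\,\|F_\omega\|_2 \leq \sqrt{\log N}\,\|F_\omega\|_2.$$
This is the ``variant of the two dimensional result'' alluded to in Section \ref{Rel}.

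The final step is to assemble these strip-wise bounds into the target global bound in the regime $2^k \gtrsim N^{1/2}$, and this is the step I expect to be the main obstacle. A naive Cauchy--Schwarz combination based on the pointwise fact that each $v\in\Sigma$ lies in $\lesssim 2^k$ strips only recovers the bound $2^{k/2}\sqrt{\log N}$, which is insufficient when $2^k \gg N^{1/2}$. Making the combination yield $N^{1/4}\sqrt{\log N}$ would require a more delicate argument, likely exploiting either the wave-packet geometry of the $F_\omega$ (so as to bound, uniformly in $k$, the number of strips genuinely contributing at any given spatial point) or an average-multiplicity estimate on the Fourier plates $P_v = A_k\cap\{|v\cdot\xi|\leq 1\}$, whose total volume $N\cdot 2^{2k}$ suggests an average covering multiplicity of $N/2^k$ and thus a quantitative $L^2$ gain even for arbitrary $\Sigma$.
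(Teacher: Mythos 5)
You have correctly identified both ingredients used in the paper's proof: the $k$-independent bound \eqref{equa32} (or equivalently the restricted-count argument behind \eqref{equa31}), and a strip-wise two-dimensional Katz estimate, which is exactly the paper's Lemma \ref{lemma98554}. You have also correctly diagnosed that the difficulty is the assembly step, and that naive triangle or Cauchy--Schwarz over strips only returns the losing factor $2^{k/2}$. However, you stop at precisely the point where the paper's argument supplies the decisive idea, and the alternatives you float do not close the gap.

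The missing step is a greedy combinatorial selection, not a geometric or Fourier-support refinement. The paper iteratively picks strips $\omega_1,\omega_2,\dots$ so that at stage $l$ the set $V_l := (A(\omega_l)\cap\Sigma)\setminus(V_1\cup\cdots\cup V_{l-1})$ of \emph{newly captured} directions has at least $\sqrt N$ elements, stopping when no such strip remains. Since the $V_l$ are pairwise disjoint subsets of $\Sigma$, the process terminates after $L\le\sqrt N$ steps. The leftover set $V=\Sigma\setminus\bigcup_l V_l$ then satisfies $|V\cap A(\omega)|<\sqrt N$ for \emph{every} $\omega$, so running the orthogonality computation of \eqref{equa31} with the multiplicity cap $\sqrt N$ in place of the separation hypothesis gives $\|\max_{\v\in V}|T_\v(S_kF)|\|_2\lesssim N^{1/4}\|S_kF\|_2$, uniformly in $k$. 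For the bad part, one applies Lemma \ref{lemma98554} to each $V_l\subset A(\omega_l)$ and sums in $\ell^2$ over the $L\le\sqrt N$ strips, paying $\sqrt L\le N^{1/4}$ and arriving at $N^{1/4}\sqrt{\log N}$. There is no case split on $k$; the two pieces combine for all $k\ge 0$. Note also a subtlety you would have hit: the strip lemma must be stated for $\max_{\v\in\Sigma\cap A}|T_\v(S_kF)|$ with the full $S_kF$, not just $F_\omega$, because a direction $\v\in A(\omega_l)$ interacts with $O(2^k)$ different $F_{\omega'}$, not only the one with $\omega'=\omega_l$.

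As for your proposed alternatives: the ``average multiplicity $N/2^k$'' observation is correct as a count ($\sum_\omega n(\omega)\lesssim N2^k$ against $|\Omega_k|\sim 2^{2k}$), but the $L^2$ estimate $\sum_\omega n(\omega)\|F_\omega\|_2^2$ needs a supremum bound on $n(\omega)$, not an average, since $\|F_\omega\|_2$ can concentrate exactly on the high-multiplicity tubes; for arbitrary $\Sigma$ a single strip can contain all $N$ directions. The wave-packet-geometry idea is too vague to evaluate and is not what the paper does. So this is a genuine gap: the greedy good/bad decomposition is the proposition's essential new idea, and it is absent from your proposal.
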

The major obstruction in getting \eqref{equa31} comes from the fact that the strips $A(\omega)$ are now allowed to contain more than $\sqrt{N}$ vectors from $\Sigma$. Define
$$\Omega_k^{bad}:=\{\omega\in\Omega_k:|A(\omega)\cap\Sigma|\ge N^{1/2}\}$$
 Recall that each $A(\omega)$ is a strip on the unit sphere with width $\sim 2^{-k}$. When $k$ gets large enough compared to the smallest distance between points in $\Sigma$, these strips can be thought of as being lines. In this scenario, an application of the line incidence theorem of Szemer\'edi and Trotter \cite{ST}  shows that $\Omega_k^{bad}$ has $O(\sqrt{N})$ tubes. Lemma \ref{L543} combined with the orthogonality of $F_\omega$ would then  immediately prove the desired bound. Of course, the problem with this approach remains the fact that for small values of $k$, the strips $A(\omega)$ can not be equated with lines. In fact, it is very easy to see that there could be $\gg N^{1/2}$ tubes in $\Omega_k^{bad}$.

The new ingredient will be to use the following  variant of the two dimensional result from \cite{Ka}.
\begin{lemma}
\label{lemma98554}
Let $A$ be a strip of width $\sim 2^{-k}$, such as any $A(\omega)$, around a great circle $C$ on $S^2$. Then
$$\|\max_{\v\in\Sigma\cap A}|T_\v(S_kF)|\|_{L^2(\R^3)}\lesssim \sqrt{\log N}\|S_kF\|_{L^2(\R^3)}.$$
\end{lemma}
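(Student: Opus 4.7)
The plan is to reduce the 3D operator $T_\v$, for $\v$ in the strip $A$, to a 2D operator acting slice-by-slice in the plane of the great circle $C$, and then invoke the 2D result from \cite{Ka}.

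Choose coordinates so that the plane containing $C$ is $\{z=0\}$; then every $\v=(v_1,v_2,v_3)\in\Sigma\cap A$ has $|v_3|\lesssim 2^{-k}$. The key tool is the reproducing kernel identity $S_kF=S_kF\ast_z\phi_k$, where $\phi_k(u)=2^k\phi(2^ku)$ for a fixed Schwartz $\phi$ with $\hat\phi\equiv 1$ on the $z$-frequency support of $S_kF$. Applying this identity at $z+tv_3$, changing variables, and using the elementary pointwise bound
$$|\phi_k(u+tv_3)\,\psi(t)|\lesssim 2^k(1+2^k|u|)^{-M}\,\tilde\psi(t),\qquad \tilde\psi(t):=|\psi(t)|(1+|t|)^M,$$
valid uniformly for $|v_3|\lesssim 2^{-k}$ because $2^k|tv_3|\lesssim|t|$, yields via Fubini the pointwise estimate
$$|T_\v F(x,y,z)|\lesssim \int_\R \rho_k(u)\,\bigl(T_{\v^{\|}}^{2D}|F^{z-u}|\bigr)(x,y)\,du,$$
where $\rho_k(u)=2^k(1+2^k|u|)^{-M}$ is an $L^1$-normalized bump at scale $2^{-k}$, $F^{z'}(x,y):=F(x,y,z')$, $\v^{\|}:=(v_1,v_2)$, and $T_{\v^{\|}}^{2D}g(x,y):=\int g(x+tv_1,y+tv_2)\tilde\psi(t)\,dt$ is the 2D averaging operator along $\v^{\|}$ with Schwartz kernel $\tilde\psi$.

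Taking the maximum over $\v\in\Sigma\cap A$, applying Katz's 2D theorem \cite{Ka} in the $(x,y)$-variables (the renormalized projections $\v^{\|}/|\v^{\|}|$ form at most $N$ unit vectors in $S^1$, and the factor $|\v^{\|}|=\sqrt{1-v_3^2}=1+O(2^{-2k})$ costs only a constant absorbed into the Schwartz kernel), and then integrating in $z$ using Minkowski's integral inequality followed by Young's convolution inequality with $\|\rho_k\|_{L^1(\R)}\lesssim 1$, gives
$$\|\max_{\v\in\Sigma\cap A}|T_\v F|\|_{L^2(\R^3)}\lesssim\sqrt{\log N}\,\|F\|_{L^2(\R^3)},$$
which is the claim.

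The main obstacle is the first step---verifying the pointwise 2D-reduction. There one must separate the coupled $(t,u)$-dependence coming from $\phi_k(u+tv_3)$ into a product of an $L^1$-bump in $u$ and a Schwartz tail in $t$; this is exactly where the smallness $|v_3|\lesssim 2^{-k}$ and the bandlimit scale $2^k$ combine. Once this pointwise bound is in hand, the remainder of the argument is a routine slice-by-slice application of the 2D theorem together with standard convolution inequalities.
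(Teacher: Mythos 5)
Your argument is correct, and it takes a genuinely different route from the paper's. The paper bounds the convolution kernel of $T_\v\circ S_k$ pointwise by a smooth $2^{-k}\times 2^{-k}\times 1$ tube bump $\eta_{\tilde\v}$ in the direction of the nearest point $\tilde\v\in C$, factors that bump as a $2^{-k}$-scale profile in the coordinate transverse to the plane of $C$ times a $2^{-k}\times 1$ tube bump inside that plane, and then invokes a two-dimensional \emph{tube-averaging} maximal estimate (the operator $M^{**}$ of \eqref{cdhjfh8437589798755}), which is not literally Katz's theorem and is therefore reproved from scratch in Section \ref{two} via a second application of Chang--Wilson--Wolff. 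You instead exploit the band-limitation of $S_kF$ in the single transverse variable through the reproducing identity $S_kF = S_kF *_z \phi_k$, and then the elementary inequality $1+2^k|u|\le(1+2^k|u+tv_3|)(1+C|t|)$ (valid since $|v_3|\lesssim 2^{-k}$) to factor the coupled dependence $\phi_k(u+tv_3)\psi(t)$ into $\rho_k(u)\tilde\psi(t)$. This reduces $T_\v S_k$ to a $2^{-k}$-scale smearing in $z$ composed with a genuine two-dimensional \emph{line} average $T^{2D}_{\v^\|}$ in the $(x,y)$-slices, which is precisely Katz's operator, so you can cite \cite{Ka} directly and skip any 2D tube-averaging analogue. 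Your approach is thus somewhat more economical; what it buys is the direct use of the known 2D result, and what it requires is the reproducing-kernel trick and the tilt-absorption bound, both of which you have verified correctly. Two small points to tidy up: (i) the final display should read $T_\v(S_kF)$ and $\|S_kF\|_2$, not $T_\v F$ and $\|F\|_2$, since the reproducing identity is specific to $S_kF$; (ii) Katz's theorem in \cite{Ka} is stated for the sharp cutoff $1_{[-1/2,1/2]}(t)$, so passing to the Schwartz weight $\tilde\psi$ requires a routine dyadic decomposition of the tail, exactly as the paper does for $\nu_{\tilde\v}$ at the start of Section \ref{two}; you should say this explicitly rather than absorbing it silently.
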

\begin{proof}
By rotation invariance we can assume  that $C$ lies in the plane $x=0$. Let $\f=(\xi,\eta,\theta)$.  For each $\v\in \Sigma\cap A$ let $\tilde{\v}\in C$ be such that $\|\tilde{\v}-\v\|\lesssim 2^{-k}$. Call $\tilde{\Sigma}$ the collection of all the  $\tilde{\v}$.  Note that $\tilde{\Sigma}$ has at most $N$ elements.
Recall that
$$T_\v(S_kF)(\x)=\int \widehat{F}(\f)\mu_k(\f)\widehat{\psi}(\f\cdot\v)e^{i\x\cdot\f}d\f.$$
It is easy to see that the inverse Fourier transform $K_\v$ of the multiplier $$m_\v(\f):=\mu_k(\f)\widehat{\psi}(\f\cdot\v)$$ satisfies
$$|K_\v(\x)|\lesssim \eta_{\tilde{\v}}(\x),$$
where $\eta_{\tilde{\v}}$ is obtained from the function
$$\eta_{(0,0,1)}(x,y,x):=\frac{2^{2k}}{(1+|x2^{k}|)^{10}(1+|y2^{k}|)^{10}}\frac{1}{(1+|z|)^{10}}$$
by applying any rotation that maps $(0,0,1)$ to $\tilde{\v}$. In other words, $\eta_{\tilde{\v}}$ is a smooth approximation to the characteristic function of the $2^{-k}\times 2^{-k}\times 1$ tube centered at the origin with the long side pointing in the direction $\tilde{\v}$.
We used the fact that $\eta_{\tilde{\v}}(\x)\sim \eta_{{\v}}(\x)$. The advantage now is that the vectors in $\tilde{\Sigma}$ are coplanar.
It suffices to prove that for each $F\in L^2(\R^3)$
$$\|\max_{\tilde{\v}\in\tilde{\Sigma}}|\int F(\x+\x')\eta_{\tilde{\v}}(\x')d\x'|\|_2\lesssim  \sqrt{\log N}\|F\|_2$$

Obtain similarly $\nu_{\tilde{\v}}(y,z)$
from
$$\nu_{(0,1)}(y,z):=\frac{2^{k}}{(1+|y2^{k}|)^{10}}\frac{1}{(1+|z|)^{10}}$$
by applying the rotation in the $x=0$ plane that maps $(0,0,1)$ to $\tilde{\v}$.

 Define the following two dimensional version of $M_0$
$$
M^{**}g(y,z)=\sup_{\tilde{\v}:=(0,\tilde{v}_2,\tilde{v}_3)\in\tilde{\Sigma}}|\int_{\R^2} g(y+y',z+z')\nu_{\tilde{\v}}(y',z')d y'dz'|
$$
We will need the fact that
\begin{equation}
\label{cdhjfh8437589798755}
\|M^{**}g\|_{L^2(\R^2)}\lesssim \sqrt{\log N}\|g\|_2.
\end{equation}
The proof of this will be postponed to Section \ref{two}.
We apply this to the functions
$$g(y,z)=F_{x}(y,z):=F(x,y,z)$$
 using H\"older's inequality to get
$$\|\max_{\tilde{\v}\in\tilde{\Sigma}}|\int_{\R^3} F(\x+\x')\eta_{\tilde{\v}}(\x')d\x'|\|_{L^2(\R^3)}\lesssim$$  $$\left(\int_\R\int_{\R} \|M^{**}F_{x+x'}(y,z)\|_{L_{y,z}^2(\R^2)}^2\frac{2^{k}}{(1+|x'2^{-k}|)^{10}}dx'dx\right)^{1/2}\lesssim
 \sqrt{\log N}\|F\|_{L^2(\R^3)}$$
\end{proof}

Next, run the following selection algorithm. Pick first any $\omega_1\in \Omega_k^{bad}$ such that $V_1:=A(\omega_1)\cap \Sigma$ has at least $\sqrt{N}$ elements. Then select $\omega_2\in \Omega_k^{bad}\setminus\{\omega_1\}$ such that
$V_2:=(A(\omega_2)\cap \Sigma)\setminus V_1$ has at least $\sqrt{N}$ elements. The algorithm stops when no such $\omega$ are available. In the end we will have the selected tubes $\omega_1,\dots,\omega_L$, and the pairwise disjoint sets $V,V_1,\ldots,V_L$ such that
$$\bigcup_{\omega\in\Omega_k}A(\omega)\cap\Sigma=\bigcup_{1\le l\le L}V_l\cup V,$$
and such that $V\cap A(\omega)$ has at most $\sqrt{N}$ elements for each $\omega\in\Omega_k$. Note that $L\le \sqrt{N}$.

Note first that the argument used to prove \eqref{equa31} also proves
\begin{equation}
\label{equaA1}
\|\max_{\v\in V}|T_\v(S_kF)|\|_2\lesssim N^{1/4}\|S_kF\|_2
\end{equation}
On the other hand Lemma \ref{lemma98554} implies that
\begin{equation}
\label{equaA1hjh878978}
\|\max_{\v\in \cup_{l}V_l}|T_\v(S_kF)|\|_2\le \sqrt{L}\max_{l}\|\max_{\v\in V_l}|T_\v(S_kF)|\|_2\lesssim N^{1/4}\sqrt{\log N}\|S_kF\|_2
\end{equation}

Proposition \ref{Prop1dfjdfuio-} now follows from \eqref{equaA1} and \eqref{equaA1hjh878978}.

\section{The decoupling of the annuli}

Define the following low, intermediate and high frequency restrictions 
$$\widehat{F^s}=\widehat{F}\sum_{k<0}\mu_k$$
$$\widehat{F^i}=\widehat{F}\sum_{3\log_2 N\ge k\ge 0}\mu_k$$
$$\widehat{F^l}=\widehat{F}\sum_{ k>3\log_2 N}\mu_k.$$
Recall the conditional expectation with respect to the $\sigma$- algebra  consisting of dyadic cubes of side length $2^{-j}$ in $\R^3$,
$$\E_jF(\x):=\sum_{Q:|Q|=2^{-3j}}\langle F,\frac{1_Q}{|Q|}\rangle 1_Q(\x)$$
 and let $\Delta_j=\E_{j+1}-\E_{j}$ be the martingale difference. Denote by
 $$\Delta(F)(\x)=(\sum_{j\ge 0}|\Delta_jF(\x)|^2)^{1/2}$$
 the discrete square function. We recall the following {\em good-lambda} inequality, which allows to compare  $F$ with its square function.

\begin{lemma}[The Chang-Wilson-Wolff inequality, \cite{CWW}]
\label{L5}

There exist constants $c_1,c_2$ such that for all $\lambda>0$ and $0<\epsilon<1$ one has
$$|\{\x\in\R^3:|F(\x)-\E_0F(\x)|>2\lambda,\Delta(f)(\x)<\epsilon\lambda\}|\le c_2e^{-\frac{c_1}{\epsilon^2}}|\{\x\in\R^3: \sup_{k\ge 0}|\E_kF(\x)|>\epsilon\lambda\}|.$$
\end{lemma}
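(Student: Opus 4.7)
The statement is the classical good-lambda inequality for dyadic martingales, so the plan is to reduce the claim to a martingale assertion on a single stopped cube and then apply an exponential martingale bound.

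First, I would pass to the dyadic martingale $f_k := \E_k F$ with increments $d_k := \Delta_{k-1}F = f_k - f_{k-1}$, so that $F - \E_0 F = \sum_{k\ge 1} d_k$ almost everywhere (by the Lebesgue differentiation theorem) and $\Delta(F)^2 = \sum_{k\ge 1}|d_k|^2$ is the martingale square function. Let $\tau(\x) := \inf\{k \ge 0 : |f_k(\x)| > \epsilon\lambda\}$, so the RHS set is precisely $\{\tau < \infty\}$. On $\{\tau = \infty\}$ one has $|F|, |\E_0 F| \le \epsilon\lambda$, hence $|F - \E_0 F| \le 2\epsilon\lambda < 2\lambda$, so the LHS set lies entirely inside $\{\tau < \infty\}$. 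Decompose the latter into the pairwise disjoint union of the maximal dyadic cubes $Q$ on which $\tau$ equals a constant $k_Q$; by construction $|\E_0 F| \le \epsilon\lambda$ and $|f_{k_Q}| \lesssim \epsilon\lambda$ on $Q$. It therefore suffices to prove, for every such $Q$,
$$|\{\x \in Q : |F(\x) - \E_0 F(\x)| > 2\lambda,\; \Delta(F)(\x) < \epsilon\lambda\}| \le c_2 e^{-c_1/\epsilon^2}|Q|.$$

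On $Q$, both $\E_0 F$ and $f_{k_Q}$ are constant and of size $O(\epsilon\lambda)$, so for $\epsilon$ sufficiently small the LHS event forces $|F - f_{k_Q}| \gtrsim \lambda$; i.e., the restarted martingale $m_k := f_{k_Q + k} - f_{k_Q}$, with $m_0 = 0$ and square function $S(m) \le \Delta(F)$, must exceed $\lambda$ while its square function remains below $\epsilon\lambda$. The desired bound now follows from the classical exponential martingale estimate --- e.g.\ by applying Doob's inequality to the supermartingale $\exp(\alpha m_k - \tfrac{\alpha^2}{2}[m]_k)$ and optimizing $\alpha \sim 1/(\epsilon^2\lambda)$ --- which yields the Gaussian tail $c_2\exp(-c_1\lambda^2/(\epsilon\lambda)^2) = c_2 e^{-c_1/\epsilon^2}$.

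The one genuinely delicate point, and the main obstacle to a direct Azuma--Hoeffding application, is that the dyadic increments $d_k$ are not a priori pointwise bounded; only the quadratic control $\sum|d_k|^2 < (\epsilon\lambda)^2$ is available on the good set. I would handle this via a secondary stopping time at the first index $k$ for which $|d_{k_Q + k}| \gtrsim \epsilon\lambda$: any point where such a large individual jump occurs automatically satisfies $\Delta(F) \ge \epsilon\lambda$ and so lies outside the LHS set, while on the truncated martingale --- with increments bounded by a multiple of $\epsilon\lambda$ and quadratic variation at most $(\epsilon\lambda)^2$ --- the exponential bound above applies verbatim, and summing over the cubes $Q$ completes the proof.
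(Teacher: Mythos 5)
The paper does not prove this lemma --- it is cited directly from \cite{CWW} (see also \cite{GHS} and \cite{Dem}) --- so your proposal is supplying an argument where the paper has none. Your overall skeleton is the standard one for the Chang--Wilson--Wolff good-$\lambda$ inequality: a stopping-time decomposition of $\{\sup_k|\E_kF|>\epsilon\lambda\}$ into maximal dyadic cubes $Q$, reduction on each $Q$ to a restarted martingale with zero initial value, and an exponential subgaussian tail bound via Doob applied to an exponential supermartingale. The first two steps are fine. The problem lies in the third.

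The specific claim that $\exp\bigl(\alpha m_k-\tfrac{\alpha^2}{2}[m]_k\bigr)$ is a supermartingale is \emph{only} correct for dyadic (two-child) martingales on $\R$, where the increment $d_k$ takes values $\pm a$ on the two halves of each interval; there it is literally the inequality $\cosh(\alpha a)\le e^{\alpha^2a^2/2}$, and one also has $[m]=\langle m\rangle$. For the dyadic filtration on $\R^3$ each cube has eight children, the increment takes eight values $a_1,\dots,a_8$ with $\tfrac18\sum a_i=0$, and the alleged supermartingale property requires $\tfrac18\sum_i e^{\alpha a_i-\tfrac{\alpha^2}{2}a_i^2}\le1$. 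This is false: take $\alpha=0.1$, $a_1=-7$, $a_2=\dots=a_8=1$; the left side evaluates to $\approx1.01>1$. So the exponential supermartingale, as you have written it, does not exist in $\R^3$. Your secondary stopping time does not repair this, and in fact introduces a second problem: once you truncate so the increments are bounded by $O(\epsilon\lambda)$, the optimal choice $\alpha\sim(\epsilon^2\lambda)^{-1}$ makes $\alpha\cdot|d_k|\sim\epsilon^{-1}\gg1$; an Azuma/Freedman-type bound under that increment constraint produces only $e^{-c/\epsilon}$, not the required $e^{-c/\epsilon^2}$.

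The repair is to replace $\tfrac12$ by a sufficiently large dimensional constant $C=C_d$: one checks that $\tfrac{1}{2^d}\sum_i e^{\alpha a_i-C\alpha^2a_i^2}\le1$ whenever $\sum a_i=0$ holds for $C$ large (in $\R^3$, $C=2$ suffices). Since $e^{x-Cx^2}$ is globally bounded, this holds with no a priori bound on the increments, so the secondary stopping time becomes unnecessary. Optimizing then gives $e^{-1/(4C\epsilon^2)}$, which is the claimed tail with a worse but still admissible constant $c_1$. With that correction your argument closes; as written it has a genuine gap at the exponential-martingale step.
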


Define $OF:=M_2(M^*F)$ where
$M_2g=(M^*(g^2))^{1/2}$
and $M^*g$ is the standard Hardy-Littlewood maximal function. The only  thing we need to know about these operators is that they are bounded on $L^2$.

Let $T$ be a linear bounded multiplier operator $T:L^2(\R^3)\to L^2(\R^3)$, that is
$$\widehat{TF}=m\hat{F},$$
for some $m\in L^{\infty}(\R^3)$. Define $T_kF:=T(S_kF)$.

The following two lemmas are proved in  \cite{Dem}. They are variants of similar lemmas from \cite{GHS}. The first result shows that $\Delta$ is dominated by a square function whose components are localized in frequency. We need frequency localization in order to be able to apply Proposition \ref{Prop1}.

\begin{lemma}\label{L1}
For each $T$ as above there exists $c_3>0$  such that for  each $F\in L^2(\R^3)$
$$\Delta(TF)(\x)\le c_3 (\sum_{k\in\Z}|O(T_kF)(\x)|^2)^{1/2}$$
almost everywhere.
\end{lemma}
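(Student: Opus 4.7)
The plan is to reduce the stated pointwise square-function inequality to an almost-orthogonality bound between the martingale differences $\Delta_j$ and the Littlewood--Paley pieces $T_kF=T(S_kF)$. Decomposing $\Delta_j(TF)=\sum_k \Delta_j(T_kF)$, I aim to establish a pointwise estimate of the form
\[
|\Delta_j(T_kF)(\x)|\le C\,\gamma(j-k)\,O(T_kF)(\x)\qquad\text{a.e.},
\]
for some $\gamma\in\ell^1(\Z)$ concentrated near $0$. Granting this, Young's convolution inequality in $\ell^2(\Z)$ (applied in the $j$-variable) gives
\[
\Delta(TF)(\x)=\Big(\sum_j|\Delta_j(TF)(\x)|^2\Big)^{1/2}\le C\|\gamma\|_{\ell^1}\Big(\sum_k O(T_kF)(\x)^2\Big)^{1/2},
\]
which is the required inequality with $c_3=C\|\gamma\|_{\ell^1}$.

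To produce the pointwise estimate I would split into two regimes. In the \emph{smooth regime} $k\le j$, the spectral localization of $T_kF$ to $\A_k$ yields a reproducing formula $T_kF=T_kF*\tilde\mu_k$ with $\tilde\mu_k(\x)=2^{3k}\tilde\mu(2^k\x)$ Schwartz, and hence a pointwise Bernstein bound $|\nabla T_kF(\y)|\lesssim 2^k M^*(T_kF)(\y)$. Since $\E_jT_kF(\x)$ and $\E_{j+1}T_kF(\x)$ are averages of $T_kF$ over dyadic cubes of side $\lesssim 2^{-j}$ containing $\x$, a mean-value argument gives
\[
|\Delta_j(T_kF)(\x)|\lesssim 2^{-j}\,|Q_j(\x)|^{-1}\!\int_{Q_j(\x)}\!|\nabla T_kF(\y)|\,d\y \lesssim 2^{k-j}\, M^*(M^*(T_kF))(\x),
\]
and since $M^*h\le M_2h$ by Cauchy--Schwarz, the right-hand side is bounded by $2^{k-j}O(T_kF)(\x)$. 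In the \emph{cancellation regime} $k>j$, the oscillation of $T_kF$ at scale $2^{-k}\ll 2^{-j}$ should produce decay in the average over $Q_j(\x)$. Pairing $1_{Q_j}$ against $\widehat{T_kF}$ (supported in $\A_k$) via Plancherel and exploiting the sinc-decay of $\widehat{1_{Q_j}}$ on $\A_k$, one obtains an $L^2$-averaged bound
\[
\|\E_j(T_kF)\|_{L^2(Q_j)}\lesssim \gamma(j-k)\,\|T_kF\|_{L^2(Q_j^*)},
\]
for a Schwartz-type enlargement $Q_j^*$ of $Q_j$, which since $\E_j T_kF$ is constant on $Q_j$ translates pointwise into
\[
|\Delta_j(T_kF)(\x)|\lesssim \gamma(j-k)\,M_2(T_kF)(\x)\le \gamma(j-k)\,O(T_kF)(\x),
\]
after the Schwartz enlargement is absorbed by the inner $M^*$ in $O$. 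Together the two regimes furnish a summable $\gamma$ (for instance $\gamma(n)=2^{-|n|/2}$), completing the reduction.

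The main obstacle I anticipate is the cancellation regime: converting Plancherel-style $L^2$ orthogonality into a pointwise bound is precisely what forces the composition $O=M_2\circ M^*$ in the statement, with the outer $M_2$ accommodating the $L^2$-averaged high-frequency cancellation and the inner $M^*$ absorbing the localization to the Schwartz enlargement of $Q_j$. Once both pointwise estimates are in hand, the $\ell^2$-assembly is routine and parallels the arguments in \cite{Dem} (and, in the translation-invariant setting, \cite{GHS}).
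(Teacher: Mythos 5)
Your overall scheme — decompose $\Delta_j(TF)=\sum_k\Delta_j(T_kF)$, seek a pointwise bound $|\Delta_j(T_kF)|\lesssim\gamma(j-k)\,O(T_kF)$ with $\gamma\in\ell^1(\Z)$, then assemble via $\ell^1*\ell^2\subset\ell^2$ — is the right one, and is in the same spirit as \cite{Dem} and \cite{GHS}, to which the paper defers for the proof. The smooth regime $k\le j$ is essentially correct: the Bernstein-type bound $|\nabla T_kF|\lesssim 2^k M^*(T_kF)$, the oscillation estimate over $Q_j(\x)$, and the monotonicity chain $M^*(M^*h)\le M_2(M^*h)=Oh$ give $|\Delta_j(T_kF)(\x)|\lesssim 2^{k-j}O(T_kF)(\x)$ as you claim, and the $\ell^2$ assembly is routine.

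The gap is in the cancellation regime $k>j$, and it is a real one. You assert that ``pairing $1_{Q_j}$ against $\widehat{T_kF}$ via Plancherel'' and ``exploiting the sinc-decay of $\widehat{1_{Q_j}}$'' yields a \emph{localized} bound $\|\E_j(T_kF)\|_{L^2(Q_j)}\lesssim\gamma(j-k)\|T_kF\|_{L^2(Q_j^*)}$. But that Plancherel pairing only controls $|\E_j T_kF(\x)|$ by $|Q_j|^{-1}\|\widehat{1_{Q_j}}\|_{L^2(\A_k)}\,\|T_kF\|_{L^2(\R^3)}$: the \emph{global} $L^2$ norm appears on the right, and frequency support in $\A_k$ says nothing by itself about spatial localization, so there is no $Q_j^*$ to speak of. The mechanism that actually produces both the decay and the localization is the reproducing identity $T_kF = T_kF*\tilde\mu_k$, where $\tilde\mu_k=2^{3k}\tilde\mu(2^k\cdot)$ is Schwartz with mean zero and $\widehat{\tilde\mu}=1$ on $\A_0$. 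Writing $\E_j(T_kF)(\x)=\int T_kF(\z)\phi_j(\z)\,d\z$ with $\phi_j(\z):=|Q_j(\x)|^{-1}\int_{Q_j(\x)}\tilde\mu_k(\y-\z)\,d\y$, the mean-zero property of $\tilde\mu_k$ forces near-cancellation in the inner integral except within $O(2^{-k})$ of $\partial Q_j(\x)$, giving $\|\phi_j\|_{L^1}\lesssim 2^{j-k}$ while $\|\phi_j\|_\infty\lesssim 2^{3j}$ and $\phi_j$ is concentrated (modulo Schwartz tails) in $Q_j(\x)$. Cauchy--Schwarz with weight $|\phi_j|$ then gives
$$|\E_j T_kF(\x)|\le\|\phi_j\|_{L^1}^{1/2}\Bigl(\int|T_kF|^2|\phi_j|\Bigr)^{1/2}\lesssim 2^{(j-k)/2}\,M_2(T_kF)(\x)\le 2^{(j-k)/2}\,O(T_kF)(\x),$$
with the Schwartz tails of $\phi_j$ absorbed by dyadic summation into the inner maximal average. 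So the decay you hoped for is attainable, but it comes from the convolution structure and mean-zero cancellation of the Littlewood--Paley kernel, not from a Plancherel pairing against the sharp cutoff $1_{Q_j}$.
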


The next Lemma controls error terms. The explanation for the extra $N$ in the denominator is that $\E_0$ and $T$ are almost orthogonal; $\E_0$ is "morally"  a Fourier restriction to the unit ball, while $T$ is restricted to frequencies larger than $N^2$.

\begin{lemma}
\label{L2}
Assume  that $m$ is zero on the ball with radius $N^2$.
Then there exists $c_4>0$   such that for  each $F\in L^2(\R^3)$
$$|\E_0(TF)(\x)|\le \frac{c_4}{N} (\sum_{k\in\Z}|O(T_kF)(\x)|^2)^{1/2}$$
almost everywhere.
\end{lemma}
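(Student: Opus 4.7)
My plan is to reduce to a single-scale pointwise bound and then to sum over scales using Cauchy--Schwarz. First I would observe that since $m$ vanishes on the ball of radius $N^2$ and $\mu_k$ is supported in $\A_k$, the product $m\mu_k$ is identically zero whenever $2^{k+1}<N^2$. Consequently $T_kF=T(S_kF)$ vanishes for $k<2\log_2N-1$, so only the scales with $2^k\gtrsim N^2$ contribute to $TF=\sum_kT_kF$.

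The main step I would aim for is the single-scale pointwise estimate
\[
|\E_0(T_kF)(\x)|\lesssim 2^{-k/2}\,M_2(T_kF)(\x).
\]
To prove it, fix $\x$, let $Q$ be the unit dyadic cube containing $\x$, and choose a smooth bump $\tilde{\mu}_k$ that equals $1$ on $\A_k$ and is supported in a slight enlargement; then $T_kF=T_kF*\check{\tilde{\mu}}_k$, and
\[
\E_0(T_kF)(\x)=\int T_kF(\z)\,\Psi_Q(\z)\,d\z,\qquad \Psi_Q(\z):=\int_Q \check{\tilde{\mu}}_k(\y-\z)\,d\y.
\]
The argument then hinges on the two bounds $\|\Psi_Q\|_\infty\lesssim 1$ and $\|\Psi_Q\|_1\lesssim 2^{-k}$. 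The delicate one is the $L^1$ bound; it exploits the cancellation $\int \check{\tilde{\mu}}_k=\tilde{\mu}_k(\mathbf{0})=0$, which forces $\Psi_Q(\z)$ to be negligibly small when $\z$ is either deep in $Q$ or far outside $Q$ by Schwartz decay of $\check{\tilde{\mu}}_k$ at scale $2^{-k}$, leaving only a tubular $2^{-k}$-neighborhood of $\partial Q$ (of volume $\sim 2^{-k}$) where $\Psi_Q$ is of order one. A Cauchy--Schwarz step against the weight $(1+|\z-\x|)^{-10}$ then converts this $L^1/L^\infty$ information into the displayed pointwise bound, using the standard domination $\int |T_kF|^2(1+|\z-\x|)^{-10}d\z\lesssim M^*(|T_kF|^2)(\x)=M_2(T_kF)(\x)^2$.

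The proof concludes by summing in $k$ and applying Cauchy--Schwarz:
\[
|\E_0(TF)(\x)|\le\sum_{k\ge 2\log_2N-1}2^{-k/2}M_2(T_kF)(\x)\le\Big(\sum_{k\ge 2\log_2N-1}2^{-k}\Big)^{1/2}\Big(\sum_k M_2(T_kF)^2(\x)\Big)^{1/2}.
\]
The first factor is $O(N^{-1})$ by the cutoff, and since $M^*G\ge |G|$ pointwise and $M_2$ is monotone in $|G|$, one has $M_2(T_kF)\le M_2(M^*T_kF)=O(T_kF)$; combining these gives the claimed inequality. The chief obstacle is the $L^1$ estimate on $\Psi_Q$: this is the single place where the Fourier support assumption on $m$ is genuinely used, and it is the quantitative embodiment of the almost-orthogonality between the low-frequency conditional expectation $\E_0$ and the high-frequency operator $T$ mentioned before the statement.
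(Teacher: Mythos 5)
Your proof is correct, and the paper itself does not actually present a proof of Lemma \ref{L2} -- it defers to \cite{Dem} (noting the lemma is a variant of results in \cite{GHS}). Your argument is the standard one for this kind of almost-orthogonality and matches the intent of those references: reproduce $T_kF$ by convolution against a kernel $\check{\tilde\mu}_k$ with vanishing integral (because $\tilde\mu_k(\mathbf{0})=0$), observe that pairing $1_Q$ with such a kernel localizes the mass of $\Psi_Q$ to a $2^{-k}$-tube around $\partial Q$ so that $\|\Psi_Q\|_1\lesssim 2^{-k}$, convert this via a weighted Cauchy--Schwarz into $|\E_0(T_kF)(\x)|\lesssim 2^{-k/2}M_2(T_kF)(\x)\le 2^{-k/2}O(T_kF)(\x)$, and finally use that the hypothesis on $m$ kills all scales with $2^{k}\lesssim N^2$, so $\bigl(\sum_{2^k\gtrsim N^2}2^{-k}\bigr)^{1/2}\lesssim N^{-1}$ emerges from the last Cauchy--Schwarz in $k$. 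All the estimates you state ($\|\Psi_Q\|_\infty\lesssim1$, $\|\Psi_Q\|_1\lesssim2^{-k}$, the weighted-$L^2$ bound $\int|\Psi_Q|^2(1+|\z-\x|)^{10}\,d\z\lesssim 2^{-k}$, and the pointwise domination $M_2(T_kF)\le O(T_kF)$) are accurate.
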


\section{Proof of Theorem \ref{main1} and \ref{main}}

We will apply the lemmas from the previous section to
$$TF=T_{\v}(F):=\int F(\x+t\v)\psi(t)dt.$$
More exactly, we will distinguish three regimes. Write
$$T_{\v}=T_{\v}^s+T_\v^i+T_{\v}^l$$
where
$T_{\v}^sF=T_{\v}(F^s)$, $T_{\v}^iF=T_{\v}(F^i)$, $T_{\v}^lF=T_{\v}(F^l)$.

We prove Theorem \ref{main}, while Theorem \ref{main1} follows via trivial modifications.

\subsection{The small regime}

Let $\phi$ is a smooth bump function adapted to $2B$ such that
$$1_B\le \phi\le 1_{2B}.$$
Note that the inverse Fourier transform
$$K_\v(x,y,z):=(\phi(\xi,\eta,\theta)\hat{\psi}(v_1\xi+v_2\eta+v_3\theta))\check {\;}(x,y,z)$$
is  easily seen to satisfy
$$|K_\v(x,y,z)|\lesssim K(x,y,z):=(1+\|(x,y,z)\|)^{-10},$$
with bound independent of $\v$.

Observe that $\widehat{F^s}$ is supported in the unit ball. Thus we can write
$$M_0F^s(x,y,z)=\max_{\v\in \Sigma}|\int \hat{F^s}(\xi,\eta,\theta)\phi(\xi,\eta,\theta)\hat{\psi}(v_1\xi+v_2\eta+v_3\theta)e^{i(x\xi+y\eta+z\theta)}d\xi d\eta d\theta|$$$$\lesssim F^s*K(x,y,z).$$
 We conclude as before that
\begin{equation}
\label{equa7}
\|M_0F^s\|_p\lesssim \|F^s\|_p
\end{equation}
for each $1\le p\le \infty$. No restriction on $\Sigma$ (not even finiteness) was needed here.

\subsection{The intermediate regime}
It follows from Proposition \ref{Prop1}, followed by the triangle inequality and the almost orthogonality of $S_kF$ that
\begin{equation}
\label{equa8}
\|M_0F^i\|_2\lesssim \sum_{0\le k\le 3\log_2 N}N^{1/4}\|S_kF\|_2\lesssim N^{1/4}\sqrt{\log N}\|F\|_2
\end{equation}

\subsection{The large regime}
Let $\epsilon_N=\frac1{\sqrt{c_1\log N}}$. Define
$$G(F)=(\sum_{k> 3\log_2 N}O(\max_{\v\in\Sigma}|T_{\v}(S_kF)|)^2)^{1/2}.$$
From Lemma \ref{L1} and Lemma \ref{L2} we know that for each $\v\in S^2$
\begin{equation}
\label{equa21}
\Delta(T_{\v}^lF)(\x)\le c_3G(F)(\x)
\end{equation}
and
\begin{equation}
\label{equa22}
|\E_0(T_{\v}^lF)(\x)|\le \frac{c_4}{N}G(F)(\x)
\end{equation}
We dropped the low frequencies in the definition of $G(F)$, since $T_{\v}^lF$ is by definition localized at high frequencies.

For each $\lambda>0$,
$$\{\x:\max_{\v\in\Sigma}|T_{\v}^lF(\x)|>4\lambda\}\subset E_{\lambda,1}\cup E_{\lambda,2}\cup E_{\lambda,3},$$
where
$$E_{\lambda,1}=\{\x:\max_{\v\in\Sigma}|T_{\v}^lF(\x)-\E_0T_{\v}^lF(\x)|>2\lambda,G(F)(\x)\le \frac1{c_3}\epsilon_N\lambda\}$$
$$E_{\lambda,2}=\{\x:G(F)(\x)> \frac1{c_3}\epsilon_N\lambda\}$$
$$E_{\lambda,3}=\{\x:\max_{\v\in\Sigma}|\E_0T_{\v}^lF(\x)|>2\lambda\}.$$
By Lemma \ref{L5} applied to each function $T_{\v}^lF$ and using  \eqref{equa21} we get
$$|E_{\lambda,1}|\le \sum_{\v\in\Sigma}|\{\x:|T_{\v}^lF(\x)-\E_0T_{\v}^lF(\x)|>2\lambda,\Delta(T_{\v}^lF)(\x)\le \epsilon_N\lambda\}|\lesssim$$
$$\lesssim \frac{1}{N}\sum_{\v\in\Sigma}|\{\x:M^{*}(T_{\v}^lF)(\x)>\epsilon_N\lambda\}|.$$
Thus
\begin{equation}
\label{equa24}
\int_{0}^{\infty}\lambda|E_{\lambda,1}|d\lambda\lesssim \log N\|F^l\|_2^2
\end{equation}
The last inequality follows since each $T_{\v}$ is bounded on $L^2$.
Next, note that by Proposition \ref{Prop1} and the almost orthogonality of $S_kF$

$$\|G(F)\|_2\lesssim N^{1/4}\|F^l\|_2.$$
Thus
\begin{equation}
\label{equa25}
\int_0^\infty\lambda|E_{\lambda,2}|d\lambda\lesssim {N^{1/2}\log N}\|F^l\|_2^2\end{equation}
Finally, from \eqref{equa22} we have
\begin{equation}
\label{equa26}
\int_0^\infty \lambda|E_{\lambda,3}|d\lambda\lesssim \sum_{\v\in\Sigma}\int_0^\infty \lambda|\{\x:G(F)(\x)\gtrsim N\lambda\}|d\lambda\lesssim \frac1{N^{1/2}}\|F^l\|_2^2.
\end{equation}
We conclude that
$$\int_{0}^{\infty}\lambda\sum_{i=1}^3|E_{\lambda,i}| d\lambda\lesssim N^{1/2}\log N\|F\|_2^2$$
and thus
\begin{equation}
\label{equa7}
\|M_0F^l\|_2\lesssim N^{1/4}\sqrt{\log N}\|F^l\|_2
\end{equation}

A minute of reflection will show that no value of $\epsilon_N$ will be able to improve the bound $N^{1/4}\sqrt{\log N}$, via this type of argument.

\section{A two dimensional result}
\label{two}

We sketch a proof of \eqref{cdhjfh8437589798755}. Recall that $2^k\ge 0$. By splitting each $\nu_{\tilde{\v}}$ dyadically, and using the decay of its tail, it suffices to consider averages over $ 2^{m-k}\times 2^m$ tubes pointing in one of the directions from $\tilde{\Sigma}$, for some fixed $m\ge 0$. To be consistent with the previous analysis we can rescale and assume $m=0$. Let as before $\psi:\R\to\R$ be a positive Schwartz function such that $\hat{\psi}$ is supported in $[-1,1]$. Via an application of the Chang-Wilson-Wolff inequality as before, it will suffice to prove
\begin{equation}
\label{equa:84375rjgbfhhvg'}
\|\max_{{\v}\in\tilde{\Sigma}}|\int_{\R^2} G(\x+t\v+s\v^{\perp})\psi(t)2^k\psi(s2^k)dtds|\|_2\lesssim \|G\|_2
\end{equation}
if $\widehat{G}$ is supported in the unit ball, and also
\begin{equation}
\label{equa:84375rjgbfhhvg}
\|\max_{{\v}\in\tilde{\Sigma}}|\int_{\R^2} S_lF(\x+t\v+s\v^{\perp})\psi(t)2^k\psi(s2^k)dtds|\|_2\lesssim \|S_lF\|_2
\end{equation}
for each $l\ge 0$.

We will first prove \eqref{equa:84375rjgbfhhvg}. Decompose
$$S_lF=\sum_{T}F_T$$
where each $T$ is a an annular tube associated with an arc $C_T$ of length $\sim 2^{-l}$ on the unit circle
$$T:=\{(\xi,\eta)\in\A_l:\frac{(\xi,\eta)}{\|(\xi,\eta)\|}\in C_T\}.$$
Note that for each $T$
$$\int_{\R^2} F_T(\x+t\x+s\v^{\perp})\psi(t)2^k\psi(s2^k)dtds=$$$$\int_{\R^2}\widehat{F_T}(\xi,\eta)\widehat{\psi}(\v\cdot(\xi,\eta))\widehat\psi(2^{-k}\v^{\perp}\cdot(\xi,\eta))e^{i(x\xi+y\eta)}d\xi d\eta$$
is only nonzero if $\v$ belongs to the union of two arcs on the unit circle of length $\sim 2^{-l}$ lying orthogonal to $T$. The collection of these arcs will have bounded overlap, when $T$ varies over all possible tubes. On the other hand, an argument similar to the one from Lemma \ref{L543} will show that
$$\|\max_{{\v}\in\tilde{\Sigma}}|\int_{\R^2}F_T(\x+t\v+s\v^{\perp})\psi(t)2^k\psi(s2^k)dtds|\|_2\lesssim \|F_T\|_2$$
for each $T$. Indeed, assume $T$ points in the $\eta$ direction. Observe first that we can assume that $2^k\gtrsim 2^l$, otherwise the integrals are zero for each $\v$ (see the discussion in Section \ref{Que}). We can also assume $|\v_2|\lesssim 2^{-l}$. But then the multiplier
$$m_\v(\xi,\eta):=\phi(\xi,2^{-l}\eta)\widehat{\psi}(\v\cdot(\xi,\eta))\widehat\psi(2^{-k}\v^{\perp}\cdot(\xi,\eta))$$
satisfies
$$\|\partial^{a_1}_\xi\partial^{a_2}_\eta m_\v\|_{\infty}\lesssim 2^{-a_2l}$$
where $\phi$ is an appropriate smooth bump function adapted to and supported on the ball $10B$.
Thus $K_\v:=(m_\v)\check {\;}$ satisfies
$$|K_\v(x,y)|\lesssim \frac{1}{(1+|x|)^{10}}\frac{2^l}{(1+|y2^l|)^{10}}.$$
Combining these observations, \eqref{equa:84375rjgbfhhvg} follows.

To prove \eqref{equa:84375rjgbfhhvg'} we note that the multiplier
$$m_\v(\xi,\eta):=\phi(\xi,\eta)\widehat{\psi}(\v\cdot(\xi,\eta))\widehat\psi(2^{-k}\v^{\perp}\cdot(\xi,\eta))$$
satisfies for each $a_i\ge 0$
$$\|\partial^{a_1}_\xi\partial^{a_2}_\eta m_\v\|_{\infty}\lesssim 1$$
and hence
$$|(m_\v)\check {\;}(x,y)|\lesssim \frac{1}{(1+|x|)^{10}}\frac{1}{(1+|y|)^{10}}.$$

\section{Final remarks}
\label{Que}

The proofs of Theorem \ref{main1} and \ref{main} show the extra difficulty one encounters when dealing with $M_0$, as opposed to $F^{**}_\delta$. More precisely, $F^{**}_\delta$ is equivalent to the following smooth version
$$N^*(F)(x,y,z)=\max_{\v\in\Sigma_\delta}|N_\v(F)(x,y,z)|.$$
Here
$$N_\v(F)(\x):=\frac{1}{\delta^2}\int_{\R^3}F(\x+t\v+s\v^{\perp}+u\v^{\perp\perp})\psi(t)\psi(s/\delta)\psi(u/\delta)dtdsdu$$
 $\Sigma_\delta$ is a collection of $\delta^{-2}$, $\delta$ separated unit vectors,
and $\v^{\perp}, \v^{\perp\perp}$ are any two unit vectors such that $\v, \v^{\perp}, \v^{\perp\perp}$ are mutually orthogonal.
 Thus, the numerology relating the two operators is $N\sim \delta^{-2}$. Recall the notation $\f=(\xi,\eta,\theta)$. Note that
$$N_\v(F)(\x)=\int\hat{F}(\f)\hat{\psi}(\f\cdot\v)\hat{\psi}(\delta\f\cdot\v^\perp)\hat{\psi}(\delta\f\cdot\v^{\perp\perp})e^{i(\x\cdot\f)}d\f$$
is non zero only if $|\f\cdot\v|\le 1$, $|\f\cdot\v^\perp|\le \delta^{-1}$ and  $|\f\cdot\v^{\perp\perp}|\le \delta^{-1}$. It is easy to see that these can not simultaneously hold if $\|\f\|\ge 10\delta^{-1}$. In particular, each $N_\v$ -and thus $N^*$- only "see" the frequencies of $F$ smaller than $\delta^{-1}$. Moreover, for these small frequencies, the Fourier restriction $\hat{\psi}(\delta\f\cdot\v^\perp)\hat{\psi}(\delta\f\cdot\v^{\perp\perp})$ does not have any significant effect (it is essentially one), and thus $M_0(S_kF)\sim N^*(S_kF)$ whenever $2^{k}\lesssim \delta^{-1}$.
Thus
$$N^*(F)\lesssim M_0(F^s)+\sum_{0\le k\le \log (\delta^{-1})}M_0(F_k)$$
The bound
$$\|N^*(F)\|_{L^2(\R^3)}\lesssim \delta^{-1/2}\sqrt{\log (\delta^{-1})}$$
follows right away from Lemma \ref{L543} and Proposition \ref{Prop1}, without any appeal to a decoupling inequality.

It would be interesting if one could prove optimal results for $M_0$, for some $2<p<\infty$. The closer $p$ is to 3, the better are the implications on the dimension of the Kakeya sets.
 It is not clear whether the technology developed in \cite{Bo} or \cite{Wol} to prove such estimates for $N^*(F)$ could be used for $M_0$, too. One case of interest (and where some degree of orthogonality could still be exploited) is whether the estimate
$$\|M_0(F)\|_4\lesssim N^{\epsilon}\|F\|_4$$
holds. 

Another interesting question is whether one can prove similar $L^2(\R^3)$ bounds for the multi-scale maximal function
$$\max_{\v\in\Sigma}\sup_{\epsilon>0}\frac{1}{2\epsilon}\int_{-\epsilon}^{\epsilon}|F(\x+t\v)|dt.$$
Optimal bounds for all $p\ge 2$ in two dimensions were proved in \cite{Ka2}.


\begin{thebibliography}{99}\bibitem{Bo} J. Bourgain, {\em Besicovitch-type maximal operators and applications to Fourier
analysis} Geom. and Funct. Anal. 22 (1991), 147-187
\bibitem{CWW} S. Y. A. Chang, M. Wilson, T. Wolff, {\em Some weighted norm inequalities concerning the Schr\"odinger operator}, Comment. Math. Helv \textbf{60} (1985), 217-246.
\bibitem{Cor} A. Cordoba {\em The Kakeya maximal function and the spherical summation multipliers},  Amer. J. Math.  99  (1977), no. 1, 1-22
\bibitem{Dem} C. Demeter, {\em Singular integrals along $N$ directions in $\R^2$},  Proc.  Amer. Math. Soc.,  138  (2010),  no. 12, 4433-4442
\bibitem{GHS} L. Grafakos, P. Honzik, A. Seeger, {\em  On maximal functions for Mikhlin-H\"ormander multiplier}, Adv. Math. \textbf{204} (2006), no. 2, 363-378.
\bibitem{Ka} N. H. Katz, {\em Remarks on maximal operators over arbitrary sets of directions},  Bull. London Math. Soc.  31  (1999),  no. 6, 700-710
\bibitem{Ka2}  N. H. Katz, {\em Maximal operators over arbitrary sets of directions}, Duke Math. J. \textbf{97} (1999), no. 1, 67–79
\bibitem{Kai} U. Keich, {\em On $L^p$ bounds for Kakeya maximal functions and the Minkowski dimension in ${\bf R}^2$},  Bull. London Math. Soc.  31  (1999),  no. 2, 213–221.
\bibitem{ST} E. Szemer\'edi, W. T. Trotter, {\em Extremal problems in discrete geometry},  Combinatorica  3  (1983),  no. 3-4, 381-392.
\bibitem{Wa} S. Wainger, {\em Applications of Fourier transform to averages over lower dimensional sets},  Proc.
Sympos. Pure Math. 35 (1979) 85-94.
\bibitem{Wol} T. Wolff, {\em An improved bound for Kakeya type maximal functions}  Rev. Mat. Iberoamericana  11  (1995),  no. 3, 651-674.
\end{thebibliography}
\end{document}